\newtheorem{lemma}{Lemma}[section]
\newtheorem{teo}[lemma]{Theorem}
\newtheorem{prop}[lemma]{Proposition}
\theoremstyle{definition}
\newtheorem{defn}[lemma]{Definition}
\newtheorem{quest}[lemma]{Question}
\newtheorem{example}[lemma]{Example}
\theoremstyle{remark}
\newtheorem{rem}[lemma]{Remark}
\newcommand{\calX} {\ensuremath {\mathcal{X}}}
\newcommand{\calR} {\ensuremath {\mathcal{R}}}
\newcommand{\calS} {\ensuremath {\mathcal{S}}}
\newcommand{\calM} {\ensuremath {\mathcal{M}}}
\newcommand{\calC} {\ensuremath {\mathcal{C}}}
\newcommand{\calB} {\ensuremath {\mathcal{B}}}
\newcommand{\calT} {\ensuremath {\mathcal{T}}}
\newcommand{\calY}{\ensuremath {\mathcal{Y}}}
\newcommand{\calO}{\ensuremath {\mathcal{O}}}
\newcommand{\calZ}{\ensuremath {\mathcal{Z}}}
\begin{document}
\title{The complement of the figure-eight knot geometrically bounds}
\author{Leone Slavich}
\date{}
\maketitle

\begin{abstract}
\noindent We show that some hyperbolic $3$-manifolds which are tessellated by copies of the regular ideal hyperbolic tetrahedron are \emph{geodesically embedded} in a complete, finite volume, hyperbolic $4$-manifold. This allows us to prove that the complement of the figure-eight knot \emph{geometrically bounds} a complete, finite volume hyperbolic $4$-manifold. This the first example of geometrically bounding hyperbolic knot complement and, amongst known examples of geometrically bounding manifolds, the one with the smallest volume.
\end{abstract}

\section{Introduction}

An orientable, complete, finite volume hyperbolic $n$-manifold \emph{geometrically bounds} if it is realised as the totally geodesic boundary of an orientable, complete, finite volume, hyperbolic $(n+1)$-manifold. 
The problem of understanding which hyperbolic $3$-manifolds bound geometrically hyperbolic $4$-manifolds dates back to work of Long and Reid \cite{longreid}, \cite{longreid1}. This problem is related to physics, in particular, to the theory of hyperbolic gravitational instantons, as shown in \cite{gibbons}, \cite{ratcliffetschanz} and \cite{ratcliffetschanz3}.

Geometrically bounding manifolds exist in all dimensions \cite{longreid1}, however it is shown in  \cite{longreid} that many closed hyperbolic $3$-manifolds do not bound geometrically compact hyperbolic $4$-manifolds: the property of being a geometric boundary is, at least in the compact case, non-trivial.

The first examples of closed geometrically bounding hyperbolic $3$-manifolds were constructed by Ratcliffe and Tschantz in \cite{ratcliffetschanz}. An infinite family of closed, geometrically bounding manifolds was constructed by Kolpakov, Martelli and Tschantz in \cite{martelli4}. The smallest known example of closed geometrically bounding manifold has volume $68.8992\dots$ (see \cite{martelli4}).

The first examples of non-compact, geometrically bounding manifolds were constructed in \cite{GPS}. More examples are described in \cite{ratcliffetschanz}. The first non-compact example given by the complement of a link in the $3$-sphere was built in \cite{Slavich}. Other examples of geometrically bounding link complements were constructed in \cite{KS2014} and \cite{martelli}. In particular, in \cite{martelli} it is shown that the complement of the Borromean rings geometrically bounds. With volume $7.32772\dots$, this was the smallest known geometrically bounding non-compact manifold, as well as the one with the smallest number of cusps.  It is also worth mentioning that all known examples of geodesically bounding hyperbolic link complements are arithmetic and have each of their respective components unknotted.

A very closely related notion is the one of \emph{geodesically embedded} manifold: a hyperbolic $n$-manifold $M$ is geodesically embedded if it is realised as a totally geodesic submanifold of a complete, finite volume hyperbolic $(n+1)$-manifold $\calX$. It is clear that if a hyperbolic manifold $M$ bounds geometrically an $(n+1)$-manifold $\calY$, then it is also geodesically embedded (simply mirror the manifold $\calY$ in its boundary). 
On the other hand, if the manifold $\calM$ it admits a fixed-point-free orientation reversing involution $i$ and is geodesically embedded in $\calX$, then it bounds geometrically, as proven in Lemma \ref{lemma:bound}.

In \cite{martelli} it is shown that all hyperbolic $3$-manifolds which are tessellated by the regular ideal hyperbolic octahedron or by the right-angled hyperbolic dodecahedron are geodesically embedded. The puropose of this paper is to build examples of non-compact, geodesically embedded hyperbolic $3$-manifolds which are tessellated by copies of the regular ideal hyperbolic tetrahedron. We call manifolds which admit such a tessellation \emph{tetrahedral}. Notice that in an ideal triangulation of a tetrahedral manifold, the valence of an ideal edge is necessarily $6$. 
It is worth mentioning that a tetrahedral $3$-manifold $M$ can have many non-equivalent triangulations by regular ideal hyperbolic tetrahedra. A complete classification up to homeomorphism of orientable tetrahedral manifolds which possess a triangulation with at most $25$ tetrahedra ($21$ tetrahedra in the non-orientable case) is given in \cite{FGGTV}, where it is also shown that all tetrahedral manifolds are arithmetic.

One of the most famous examples of tetrahedral manifold is the complement of the figure-eight knot, represented in Figure \ref{fig:fig8}.

\begin{figure}[htbp]
\centering
\includegraphics[width=0.25\textwidth]{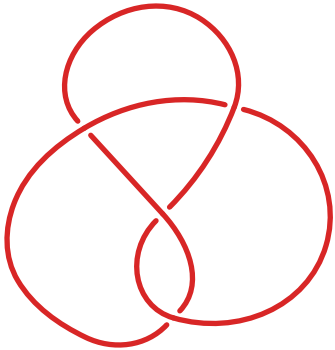}
\caption{The figure-eight knot}\label{fig:fig8}
\end{figure}

The complement of the figure-eight knot is hyperbolic and it admits an ideal triangulation consisting of two regular ideal hyperbolic tetrahedra \cite[Chapter 1]{notes}. Its hyperbolic volume is equal to $2.029883\dots$, which is the minimal volume for a hyperbolic knot complement. Moreover, it fibers over the circle with fiber given by a once-punctured torus, and it is the only arithmetic knot complement \cite{reid}.

It is reasonable to ask whether the complement of the figure-eight knot geometrically bounds. The main result of this paper is a positive answer to such question:

\begin{teo}\label{teo:fig8bounds}
The complement of the figure-eight knot geometrically bounds an orientable, complete, finite volume, hyperbolic $4$-manifold.
\end{teo}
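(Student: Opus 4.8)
The plan is to realize the figure-eight knot complement $M$ as a totally geodesic submanifold of a complete, finite volume hyperbolic $4$-manifold, and then to invoke Lemma \ref{lemma:bound}. Its hypothesis is readily available for $M$: the figure-eight complement is the orientation double cover of the Gieseking manifold (the non-orientable cusped hyperbolic $3$-manifold built from a single regular ideal tetrahedron), so the nontrivial deck transformation is a fixed-point-free orientation-reversing involution of $M$. Thus it suffices to show that $M$ is geodesically embedded in some complete, finite volume hyperbolic $4$-manifold $\calX$.

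To construct $\calX$ I would exploit the fact that $M$, being tessellated by two regular ideal hyperbolic tetrahedra, lies in the commensurability class of the Bianchi group $\mathrm{PGL}_2(\matZ[\omega])$, $\omega = e^{2\pi i/3}$. In dimension four this class is cut out, as the stabilizer of a totally geodesic hyperplane $\matH^3 \subset \matH^4$, inside an arithmetic lattice of $\mathrm{Isom}(\matH^4)$: concretely, one may take the integral orthogonal group of a suitable rational quadratic form of signature $(4,1)$ (for instance $\langle 1,1,1,1,-3\rangle$, up to rational rescaling) whose restriction to a codimension-one subspace has orthogonal group defining the $\matQ(\sqrt{-3})$ commensurability class. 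Passing to a torsion-free finite-index subgroup produces a cusped hyperbolic $4$-manifold $\calX_0$ carrying an immersed totally geodesic hypersurface commensurable with $M$; equivalently, and in the concrete spirit of this paper, one builds $\calX_0$ by gluing copies of a single hyperbolic $4$-polytope so that the hyperplanes of the tessellation cut out a copy of this hypersurface. One then passes to a finite cover $\calX \to \calX_0$ in which a connected component of the preimage of the hypersurface is embedded, two-sided, and isometric to $M$ itself. If this embedded copy of $M$ happens to be separating, cutting $\calX$ along it already exhibits $M$ as the totally geodesic boundary of one of the two pieces; in any case $M \hookrightarrow \calX$ is a geodesic embedding, and Lemma \ref{lemma:bound} together with the Gieseking involution yields an orientable, complete, finite volume hyperbolic $4$-manifold with totally geodesic boundary $M$.

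I expect the last step to be the main obstacle. Immersed totally geodesic hypersurfaces are easy to produce, but forcing the hypersurface to be embedded and, more delicately, to be exactly the figure-eight knot complement — with its single torus cusp and its two-tetrahedron triangulation — rather than some other tetrahedral $3$-manifold of the same volume (its sister, say) or a proper finite cover of $M$, requires a careful analysis of the combinatorics of the tessellation by regular ideal tetrahedra: one must track how the facet-pairings of the $4$-polytope induce the face gluings of the two ideal tetrahedra of $M$, and choose the covering $\calX \to \calX_0$ — via an explicit coloring, or a surjection onto a finite group — so that the relevant preimage component is connected, embedded, and of volume exactly twice that of the regular ideal tetrahedron. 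Keeping the covering small, so that $\calX$ is explicit and of small volume, is a further optimization but not an obstruction.
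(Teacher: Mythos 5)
Your overall frame is sound---reduce to geodesic embeddability and then apply Lemma \ref{lemma:bound} via the Gieseking involution; that reduction is exactly the one the paper makes. But the route you take to the embedding is genuinely different from the paper's and, as written, has a real gap at the step you yourself flag. Producing an \emph{immersed} totally geodesic hypersurface commensurable with $M$ inside an arithmetic quotient for the form $\langle 1,1,1,1,-3\rangle$ is indeed routine. What is not routine is upgrading this to an \emph{embedded} hypersurface isometric to $M$ itself: the components of the preimage of the hypersurface in a finite cover $\matH^4/\Gamma'$ are quotients $\matH^3/(\Lambda\cap\gamma\Gamma'\gamma^{-1})$, i.e.\ finite covers of the original immersed piece, and nothing in your argument forces one of them to be the figure-eight complement rather than a proper cover of it or another tetrahedral manifold in the same commensurability class. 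The standard tool that closes this gap is separability of geometrically finite subgroups in the ambient arithmetic lattice: one realises $\pi_1(M)$ itself as a subgroup of the hyperplane stabiliser (which already requires checking that $\pi_1(M)$ is conjugate into the chosen integral lattice, not merely commensurable with it), and then uses separability to find a finite-index $\Gamma'$ containing $\pi_1(M)$ in which the corresponding hypersurface is embedded. You neither invoke nor prove such a separability statement, and ``an explicit coloring, or a surjection onto a finite group'' is not a substitute for it; this is precisely the missing content.

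By contrast, the paper sidesteps all of this by working with an explicit polytope, the ideal rectified $5$-cell $\calR$: a $6$-valent $4$-dimensional triangulation with trivial return maps yields a complete hyperbolic $4$-manifold with totally geodesic boundary whose boundary components are read off as vertex links, and a concrete triangulation with six $4$-simplices realises the two-tetrahedron triangulation of the figure-eight complement as such a link. This produces the geodesic embedding by direct inspection and with complete control of the volume (Euler characteristic $2$, improvable to $1$), whereas the separability route---even once completed---gives no bound on the index of the cover and hence no explicit model. To repair your argument you should either supply the separability input for the relevant arithmetic lattice, or replace that step by an explicit combinatorial construction of the kind carried out in Sections \ref{sec:triangulations} and \ref{sec:fig8bounds}.
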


The paper is organized as follows: in Section \ref{sec:5-cell} we introduce the main ingredient of our construction, a $4$-dimensional hyperbolic Coxeter polytope called the \emph{rectified $5$-cell} $\calR$. This polytope is obtained by truncating the $4$-dimensional simplex $S^4$. As a consequence of this fact it is possible to encode gluings between copies of $\calR$ along certain facets using $4$-dimensional triangulations. 

In Section \ref{sec:triangulations} we introduce the notion of a \emph{$6$-valent $4$-dimensional triangulation with trivial face cycles}, and we prove that such a triangulation defines a hyperbolic $4$-manifold with totally geodesic boundary. 
We show that the boundary components of the manifold are in a one-to-one correspondence with the vertices of the triangulation, and that the geometry of each boundary component is encoded by the link of the associated vertex. 
A straightforward consequence is that all tetrahedral manifolds which can be realised as vertex links of $6$-valent $4$-dimensional triangulations with trivial face cycles are geodesically embedded. 

In Section \ref{sec:fig8bounds} we describe a $6$-valent triangulation which realises the figure-eight knot complement as a vertex link, proving that this knot complement geometrically bounds.

\medskip

\textbf{Acknowledgements.} The author is grateful to the department of Mathematics of the University of Bologna for the hospitality when this work was conceived and written. Also, the author is grateful to Matthias Goerner and Stavros Garoufalidis for several helpful suggestions and remarks, in particular for highlighting the fact that Proposition \ref{prop:manifold} is necessary, and for making the author aware of Remark \ref{rem:minvolume}.

\section{The rectified $5$-cell}\label{sec:5-cell}

Below, we describe the main building ingredient of our construction, \textit{the rectified $5$-cell}, which can be realised as a non-compact finite-volume hyperbolic $4$-polytope. First, we start from its Euclidean counterpart, which shares the same combinatorial properties.  

\begin{defn}\label{defn:5cell}
The \emph{Euclidean rectified $5$-cell} $\mathcal{R}$ is the convex hull in $\mathbb{R}^5$ of the set of $10$ points whose coordinates are obtained as all possible permutations of those of the point $(1,1,1,0,0)$.
\end{defn}

The rectified $5$-cell has ten facets ($3$-dimensional faces) in total. Five of these are regular octahedra. They lie in the affine planes defined by the equations 
\begin{equation}
\sum_{i=1}^5 x_i=3,\; x_j=1, \mbox{ for each } j\in \{1,2,3,4,5\},
\end{equation}
and are naturally labelled by the number $j$. 

The other five facets are regular tetrahedra. They lie in the affine hyperplanes given by the equations 
\begin{equation}
\sum_{i=1}^5 x_i=3,\; x_j=0, \mbox{ for each } j\in \{1,2,3,4,5\},
\end{equation}
and are also labelled by the number $j$.

Also, the polytope $\mathcal{R}$ has $30$ two-dimensional triangular faces, $30$ edges and $10$ vertices. The triangular faces are of two types: $10$ of these are adjacent to two octahedral facets, while the other $20$ are adjacent to a tetrahedron on one side and to an octahedron on the other side.
We color the $2$-faces of the first type in red, and those of the second type in blue.

We note the following facts about the combinatorial structure of $\mathcal{R}$:
\begin{enumerate}
\item each octahedral facet $F$ has a red/blue chequerboard colouring, such that $F$ is adjacent to any other octahedral facet along a red face, and to a tetrahedral facet along a blue face;
\item a tetrahedral facet having label $j\in \{1,2,3,4,5\}$ is adjacent along its faces to the four octahedra with labels $k\in \{1,2,3,4,5\}$, with $k$ different from $j$;
\item the tetrahedral facets meet only at vertices and their vertices comprise all those of $\mathcal{R}$.
\end{enumerate}

\begin{defn}\label{defn:hyperbolic-5-cell}
Like any other uniform Euclidean polytope, the rectified $5$-cell has a hyperbolic ideal realisation, which may be obtained in the following way:
\begin{enumerate}
\item normalise the coordinates of the vertices of $\mathcal{R}$ so that they lie on the unit sphere $\mathbb{S}^3\subset \mathbb{R}^4$;
\item interpret $\mathbb{S}^3$ as the boundary at infinity of the hyperbolic $4$-space $\mathbb{H}^4$ in the Klein-Beltrami model.
\end{enumerate}
The convex hull of the vertices of $\mathcal{R}$ now defines an ideal polytope in $\mathbb{H}^4$, that we call the \emph{ideal hyperbolic rectified $5$-cell}.
\end{defn}

With a slight abuse of notation, we continue to denote the ideal hyperbolic rectified $5$-cell by $\mathcal{R}$.

\begin{rem}
In the hyperbolic realization of $\calR$, the octahedral facets become regular ideal right-angled octahedra, while the tetrahedral facets become regular ideal hyperbolic tetrahedra. The traingular faces become ideal triangles, and all the edges are ideal, meaning that they connect two ideal vertices.
The vertex figure $P$ of the ideal hyperbolic rectified $5$-cell is a right Euclidean prism over an equilateral triangle, with all edges of equal length. At each vertex, there are three octahedra meeting side-by-side, corresponding to the square faces, and two tetrahedra, corresponding to the triangular faces.

The dihedral angle between two octahedral facets is therefore equal to $\pi/3$, while the dihedral angle between a tetrahedral and an octahedral facet is equal to $\pi/2$. 
\end{rem}

\begin{rem}\label{rem:volume}
The volume $v_{\mathcal{R}}$ of the rectified $5$-cell equals $2\pi^2/9$, as shown in \cite{KS2014}.
\end{rem}

\begin{rem}\label{rem:correspondence} Another way to construct the rectified $5$-cell is to start with a regular Euclidean $4$-dimensional simplex $S_4$ and take the convex hull of the midpoints of its edges. This is equivalent to truncating the vertices of $S_4$, and enlarging the truncated regions until they become pairwise tangent along the edges of $S_4$.

With this construction, it is easy to see that the symmetry group of $\mathcal{R}$ is isomorphic to the symmetry group of $S_4$, which is known to be $\mathfrak{S}_5$, the group of permutations of a set of five elements. Moreover, we obtain the following one-to-one correpondences:
\begin{enumerate}
\item $\{\mbox{Vertices of }\calR\}\leftrightarrow\{\mbox{Edges of }S_4\}$
\item $\{\mbox{Tetrahedral facets of }\calR\}\leftrightarrow\{\mbox{Vertices of }S_4\}$
\item $\{\mbox{Octahedral facets of }\calR\}\leftrightarrow\{\mbox{Facets of }S_4\}$
\item $\{\mbox{Ideal triangles adjacent to two octahedral facets of }\calR\}\leftrightarrow\{\mbox{Triangular faces of }S_4\}$.
\end{enumerate}

\end{rem}

\section{Triangulations and hyperbolic $4$-manifolds}\label{sec:triangulations}

As a consequence of the correspondences between the strata of $\calR$ and the strata of $S_4$, we can encode glueings between copies of $\calR$ using $4$-dimensional triangulations.

\begin{defn}\label{defn:triangulation}
A \textit{$4$-dimensional triangulation} $\mathcal{T}$ is a pair 
\begin{equation}
(\{\Delta_i\}_{i=1}^{2n}, \{g_j\}_{j=1}^{5n}),
\end{equation} 
where $n$ is a positive natural number, the $\Delta_i$'s are copies of the standard $4$-dimensional simplex $S_4$, and the $g_j$'s are a complete set of simplicial pairings between the $10n$ facets of all $\Delta_i$'s.
\end{defn}

\begin{defn}\label{defn:orientable-triangulation}
A triangulation is \textit{orientable} if it is possible to choose an orientation for each tetrahedron $\Delta_i$, $i=1,\dots,2n$, so that all pairing maps between the facets are orientation-reversing (see also \cite[Definition $4.2$]{KS2014}).
\end{defn}

Given a $4$-dimensional triangulation $\calT=(\{\Delta_i\}_{i=1}^{2n}, \{g_j\}_{j=1}^{5n})$, we can build a $4$-dimensional CW-complex $M_{\calT}$ as follows:
\begin{enumerate}
\item associate to each $\Delta_i$ a copy $\mathcal{\calR}_i$ of the ideal rectified $5$-cell \calR;
\item a face pairing $g_{kl}$ between the facets $F$ and $G$ of the simplices $\Delta_k$ and $\Delta_l$ defines a \textit{unique} isometry between the respective octahedral facets ${\calO}_F$ and ${\calO}_G$ of $\mathcal{R}_k$ and $\mathcal{R}_l$. The isometry is determined by the behaviour of the pairing map $g_{kl}$ on the edges of the tetrahedra $F$ and $G$, which are in a one-to-one correspondence with the ideal vertices of ${\calO}_F$ and $\calO_G$, respectively.   
\item identify all octahedral facets of the polytopes $\mathcal{R}_i$, $i=1\dots,2n$ using the isometries defined by the pairings $g_j$, $j=1\dots 5n$, to produce $M_{\mathcal{T}}$.
\end{enumerate}

With this construction, the tetrahedral facets of the various copies of $\calR$ are glued together along their triangular faces to produce the boundary $\partial M_{\calT}$ of $M_{\calT}$. If the triangulation $\calT$ is orientable, then also $M_{\calT}$ is orientable. 

Given a $4$-dimensional triangulation $\mathcal{T}=(\{\Delta_i\}_{i=1}^{2n}, \{g_j\}_{j=1}^{5n})$, let us consider the abstract graph with vertices given by the $20n$ two-dimensional faces of the simplices $\{\Delta_i\}_{i=1}^{2n}$ and edges connecting two vertices if the corresponding two-faces are identified by a pairing map. This graph is a disjoint union of cycles $\{c_1,\dots,c_k\}$, which we call the \emph{face cycles} corresponding to the triangulation $\mathcal{T}$.

To each face cycle $c$ of $\calT$ there is a naturally associated affine \emph{return map} $r_c$ from the $2$-simplex to itself: simply follow the simplicial pairings from one simplex to the following one, until the cycle closes up.

A simple condition on the return maps ensure that the complex $M_{\calT}$ is a manifold as expressed in the following Proposition:

\begin{prop}\label{prop:manifold}
Let $\calT$ be a $4$-dimensional triangulation. The complex $M_{\calT}$ is a manifold if and only if, for each face cycle $c$ of $\calT$, the return map $r_c$ is the identity.
\end{prop}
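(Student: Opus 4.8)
The plan is to analyze the local structure of $M_{\calT}$ around each stratum and check when the link of that stratum is a sphere. Since the octahedral facets are glued in pairs by isometries, the only places where $M_{\calT}$ can fail to be a manifold are along the codimension-$\geq 2$ strata coming from the triangular $2$-faces, edges, and vertices of the copies of $\calR$ (the boundary strata, living on the tetrahedral facets, are handled separately and contribute to $\partial M_{\calT}$). The key observation is that around a triangular face of the first type (red, adjacent to two octahedra), the cyclic gluing of $\calR$'s is governed precisely by a face cycle $c$ and its return map $r_c$: passing once around the $2$-face glues a copy of $\calR$ to the next via the isometry dictated by the simplicial pairing on the corresponding $2$-simplices of the $\Delta_i$'s, and the cycle closes up after finitely many steps with total composite the lift of $r_c$.

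First I would set up the link of a point in the interior of a red $2$-face. The dihedral angle at such a face between the two adjacent octahedral facets is $\pi/3$ (by the Remark following Definition \ref{defn:hyperbolic-5-cell}), so each copy of $\calR$ contributes a wedge of angle $\pi/3$ transverse to the $2$-face; going around the face cycle $c$, the successive copies fit together edge-to-edge, and after completing the cycle one returns to the starting copy. The composite identification of the starting wedge with itself is exactly the return map $r_c$ acting on the $2$-simplex (equivalently, on the normal $2$-disk). The link is therefore a closed surface — a priori a sphere only if the wedges fit together with total angle $2\pi$ and no rotational defect. The crucial point is that trivial face cycles force the total dihedral angle around the face to be a multiple of $2\pi$ automatically: since $r_c$ is the identity, the copies of $\calR$ close up correctly after going around, and the six wedges of angle $\pi/3$ that meet along each such face (the valence being forced, just as the edge valence is forced to be $6$ in the tetrahedral picture) give total angle $2\pi$. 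If instead $r_c$ is a nontrivial affine self-map of the $2$-simplex — necessarily a rotation by $\pm 2\pi/3$ or a reflection, since it permutes the three vertices — then the normal disk to the $2$-face is identified with itself by a nontrivial rotation, producing a cone point rather than a smooth point, so $M_{\calT}$ fails to be a manifold there. This gives the "only if" direction, and together with the angle count the "if" direction at the level of $2$-faces.

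Next I would push the manifold condition down the remaining strata: edges and vertices. The link of a point on an open edge of $\calR$ is a spherical polygon determined by the three octahedra and the configuration there; once all the $2$-faces around it have trivial return maps, the edge identifications close up consistently and the link is $S^2$ by a direct check of the vertex figure $P$ (the equilateral triangular prism) described in the Remark — three octahedra side-by-side plus two tetrahedra, whose boundary contribution assembles into a sphere exactly when the surrounding $2$-face cycles are trivial. Finally, at a vertex of $M_{\calT}$, the link is a cusp cross-section, a flat $3$-manifold; this is automatically a manifold once the lower strata are, and in fact its geometry is the one analyzed in Section \ref{sec:triangulations} for the boundary components. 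I would also remark that the return map $r_c$ being trivial is independent of the chosen base $2$-simplex in the cycle, since changing the base conjugates $r_c$, and an affine self-map of the simplex conjugate to the identity is the identity.

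The main obstacle I expect is making precise the claim that along a red $2$-face the wedges fit together with total angle exactly $2\pi$ when $r_c$ is trivial — i.e. ruling out the possibility of total angle $2\pi k$ with $k>1$ while the return map is still the identity. This requires knowing that the number of copies of $\calR$ in a face cycle around a red $2$-face equals the length of $c$, and that combinatorially this length, together with the $\pi/3$ angle, forces $k=1$ when there is no rotational monodromy. I would handle this by exploiting the correspondence in Remark \ref{rem:correspondence} between red $2$-faces of $\calR$ and triangular faces of $S_4$: a face cycle of $\calT$ is literally an edge-to-edge cycle of $2$-simplices in the triangulation, the number of simplices around such a triangular face is the length of the cycle, and one shows directly that a trivial return map forces this length to be $3$, hence total dihedral angle $3 \cdot \pi/3$... which is $\pi$, not $2\pi$ — so in fact one must be careful: the geometry of $\calR$ is such that the correct valence making the link a sphere is $6$, and the return map controls whether the cycle wraps once (length $6$, angle $2\pi$, identity return map) or "half-way" with a nontrivial $\matZ/2$ or $\matZ/3$ monodromy. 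Pinning down this bookkeeping precisely, via the Euclidean rectified $5$-cell where everything is an explicit linear-algebra computation in $\matR^5$, is the technical heart of the argument.
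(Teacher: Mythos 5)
Your proposal has a genuine conceptual gap: you conflate the topological manifold condition of this Proposition with the hyperbolicity (angle) condition, which the paper treats separately in Proposition \ref{prop:hyperbolicstructure}. The statement here is purely topological and makes no reference to the length of the face cycles: a face cycle of any length with trivial return map gives a manifold point, while the requirement that exactly six wedges of dihedral angle $\pi/3$ meet around a red $2$-face (total angle $2\pi$) is the $6$-valence condition needed later for the hyperbolic structure, not for manifoldness --- topologically, a cone of total angle $2\pi k$ over a disk is still a disk. Your own closing paragraph exposes the problem: the claim that ``a trivial return map forces this length to be $3$'' is false (the cycle length is a combinatorial datum independent of the return map; in Example \ref{ex:block} all cycles have length $6$ with trivial return maps), and the ensuing attempt to reconcile the angle count with the return map does not close up. Relatedly, your link computation is in the wrong dimension: the link of an interior point of a $2$-face in a $4$-complex is a closed $3$-manifold that must be $S^3$, not a ``closed surface.'' The paper identifies this link as a Seifert-fibered space obtained from $S_2\times I$ by gluing the ends via $r_c$ and collapsing the boundary fibers; it is $S^3$ exactly when $r_c$ is trivial, and is the lens space $L(3,1)$ (or a non-orientable quotient) otherwise. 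This is also why your ``only if'' direction is insufficient as stated: a rotational ``cone point'' in a $2$-dimensional normal slice would still be a topological manifold point, so the genuine obstruction must be located in the $3$-dimensional link, not in an angle defect.

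A second gap is your treatment of the edge and vertex strata. The paper's key observation is that every edge of $\calR$ lies in a tetrahedral facet, hence every edge of $M_{\calT}$ lies in the boundary $\partial M_{\calT}$; one then checks that the links of edge midpoints in the $3$-dimensional boundary are $2$-spheres, which reduces to the edge cycles of $\partial M_{\calT}$ being the restrictions of the face cycles of $\calT$ to the edges of $S_2$ (so trivial face-cycle return maps imply trivial edge-cycle return maps). Your proposal instead invokes the vertex figure $P$ and a configuration of ``three octahedra,'' which pertains to the ideal vertices (the cusps) rather than to the edges, and leaves the actual verification unaddressed.
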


\begin{proof}
We need to check that the links of the midpoints of the $2$-faces of $M_{\calT}$ are spheres, and that the links of the midpoints of the edges are disks (notice that all edges in $\calR$ belong to some tetrahedra, and therefore all the edges of $M_{\calT}$ lie in the boundary). The link of the midpoint of a $2$-face $F$ corresponding to a face cycle $c$ has the structure of a (possibly non-orientable) Seifert bundle over $S^2$, which is built as follows:
\begin{enumerate}
\item Consider the product $S_2\times I$, where $S_2$ denotes the $2$-simplex. 
\item Identify $S_2\times\{0\}$ to $S_2\times\{1\}$ by applying the return map $r_c$ associated to the face cycle $c$, to produce a solid torus or Klein bottle with a model Seifert fibration. 
\item Collapse each fiber in the boundary to a single point. 
\end{enumerate}
The resulting space is a sphere if and only if the return map is the identity (if the return map is a cyclic permutation of the vertices, it is homeomorphic to the lens space $L(3,1)$).

Concerning the edges of $M_{\calT}$, it is sufficient to check that the link of their midpoints in the tetrahedral boundary $\partial M_{\calT}$ are all $2$-spheres or, equivalently, that the return maps for the \emph{edge cycles} in $\partial M_{\calT}$ are trivial. Each edge of $R$ corresponds to a pair $(F,v)$, where $F$ is a $2$-face of $S_4$ and $v$ is a vertex of $S_4$ adjacent to $F$. Because of this, the edge cycles of $\partial M_{\calT}$ are simply the restriction of the face cycles of $\calT$ to the edges of $S_2$ and clearly if the return maps on the face cycles are trivial, then the return maps on the edge cycles are trivial.  

\end{proof}

Now, let us suppose that $\calT$ is a triangulation with trivial return maps.
The manifold $M_{\calT}$ will not be, in general, complete hyperbolic. However, a simple condition on the face cycles of $\calT$ esures that the  hyperbolic structure on all copies of $\calR$ match together to give a hyperbolic structure on $M_{\calT}$.

\begin{defn}
A $4$-dimensional triangulation $\calT$ is \emph{$6$-valent} if \emph{all} the face cycles have length $6$.
\end{defn}

\begin{prop}\label{prop:hyperbolicstructure}
Let $\calT=(\{\Delta_i\}_{i=1}^{2n}, \{g_j\}_{j=1}^{5n})$ be a $6$-valent $4$-dimensional triangulation with trivial return maps. The associated manifold $M_{\calT}$ is a complete, non-compact, finite volume hyperbolic $4$-manifold with totally geodesic boundary.
\end{prop}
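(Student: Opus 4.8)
The plan is to recognise $M_{\calT}$ as the space obtained from the $2n$ isometric copies $\calR_1,\dots,\calR_{2n}$ of the ideal hyperbolic rectified $5$-cell by the side-pairings on their octahedral facets determined by the $g_j$, and to verify the hypotheses of the Poincar\'e polyhedron theorem in the version that yields a manifold with totally geodesic boundary. By Proposition \ref{prop:manifold} the triviality of the return maps already guarantees that $M_{\calT}$ is a manifold, with boundary $\partial M_{\calT}$ the union of the tetrahedral facets glued along their triangular faces; so it remains only to build the hyperbolic metric and to control the geometry along the codimension-$2$ strata, along the tetrahedral facets, and near the ideal vertices.

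First I would treat the codimension-$2$ faces lying in the interior of $M_{\calT}$. These are exactly the images of the red $2$-faces of the $\calR_i$ --- the blue $2$-faces lie on tetrahedral facets, hence on $\partial M_{\calT}$ --- and by Remark \ref{rem:correspondence} the red $2$-faces correspond precisely to the $2$-faces of the simplices $\Delta_i$. Hence the copies of $\calR$ arranged cyclically around a red $2$-face of $M_{\calT}$ are counted, with multiplicity, by the length of the corresponding face cycle of $\calT$, which equals $6$ because $\calT$ is $6$-valent. Since the dihedral angle of $\calR$ between two octahedral facets is $\pi/3$, the total angle around such a face is $6\cdot\pi/3=2\pi$; combined with the triviality of the return map $r_c$, this shows that going once around the face produces the trivial isometry, so the hyperbolic metrics of the $\calR_i$ fit together smoothly across the red $2$-faces. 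Nothing further has to be checked along the ideal edges: every edge of $\calR$ is ideal, and the topology of the links of their midpoints was already handled in the proof of Proposition \ref{prop:manifold}.

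Next I would treat the boundary. Each blue triangle of $\calR$ lies on a unique octahedral facet, so the octahedral side-pairings identify the blue triangles in pairs, and exactly two tetrahedral facets meet along any given blue $2$-face of $M_{\calT}$. Since the dihedral angle of $\calR$ between a tetrahedral and an octahedral facet is $\pi/2$, the link of an interior point of such a blue $2$-face is an arc of angle $\pi/2+\pi/2=\pi$ with endpoints on the two incident tetrahedral facets; so near that point $M_{\calT}$ looks locally like a hyperbolic half-space and the two tetrahedral facets join up totally geodesically. Therefore $\partial M_{\calT}$ is a non-empty totally geodesic hypersurface. Alternatively, one may double $M_{\calT}$ along $\partial M_{\calT}$: in the double every blue $2$-face acquires total angle $4\cdot\pi/2=2\pi$, the standard Poincar\'e theorem applies directly, and $M_{\calT}$ is recovered as one half, with $\partial M_{\calT}$ the fixed-point set of the isometric swap involution, hence totally geodesic.

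Finally, completeness and finite volume. Since $v_{\calR}=2\pi^2/9<\infty$ by Remark \ref{rem:volume}, we get $\Vol(M_{\calT})=2n\,v_{\calR}<\infty$, and $M_{\calT}$ is non-compact because $\calR$ is a non-compact ideal polytope. For completeness I would fix at each ideal vertex of each octahedral facet the standard horoball that is preserved by every isometry of a regular ideal right-angled octahedron; the horospherical cross-section of $\calR$ at an ideal vertex is then the Euclidean prism $P$, and every side-pairing, being an isometry of $\calR$ fixing the relevant ideal vertex, restricts to a Euclidean isometry of these horospheres. Consequently the cross-section of $M_{\calT}$ at each ideal vertex is a compact Euclidean $3$-manifold with (possibly empty) boundary, tiled by finitely many copies of $P$, so the cusp holonomy is parabolic with no dilation part and $M_{\calT}$ is complete. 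I expect this last point to require the most care: one must check that these cross-sections really do close up into honest compact Euclidean manifolds-with-boundary and that no similarity-but-not-isometry identification creeps in, i.e.\ that there is no hidden obstruction to completeness at the cusps; what rules this out here is precisely that every identification is the restriction of an isometry of the fixed polytope $\calR$, in contrast with the variable-shape situation governed by Thurston's gluing equations.
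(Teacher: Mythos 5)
Your argument is correct, but it takes a genuinely different route from the paper's. The paper never checks dihedral-angle sums around the codimension-$2$ strata: following Thurston, it observes that since every positive-dimensional face of $\calR$ is ideal, it suffices to verify that the complex obtained by gluing the $20n$ copies of the vertex figure $P$ (a Euclidean prism over an equilateral triangle) along their square faces is a compact Euclidean manifold with totally geodesic boundary. It then identifies this complex as an $I$-bundle over a surface $\calS$ tessellated by equilateral triangles, and the Euclidean structure on $\calS$ exists exactly when each vertex of the tessellation meets six triangles, which is the $6$-valence condition. You instead go through the Poincar\'e polyhedron theorem: the angle sum $6\cdot\pi/3=2\pi$ around the interior (red) $2$-faces, the angle $\pi/2+\pi/2=\pi$ along the boundary (blue) $2$-faces, and a separate completeness check via compatible horoballs. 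Both proofs consume the same combinatorial input ($6$-valence, trivial return maps, gluings by isometries of a fixed polytope); yours makes explicit where each hypothesis enters, while the paper's single cusp-link check subsumes the angle conditions precisely because every $2$-face of $\calR$ is incident to ideal vertices. Two small imprecisions on your side, neither fatal: the horoballs should be chosen invariant under the full symmetry group $\mathfrak{S}_5$ of $\calR$, so that the facet-pairing isometries (which are induced by combinatorial isomorphisms of $\calR$) carry horoball to horoball --- invariance under isometries of a single octahedral facet is not quite the right statement; and the cusp cross-sections always have non-empty boundary, being $I$-bundles over flat surfaces as recorded in Remark \ref{rem:cusps}.
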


\begin{proof}
The pairing maps $\{g_j\}_{j=1}^{5n}$ induce identifications between the square faces of the $2n\cdot 10$ copies $\{P_1,\dots,P_{20n}\}$ of the vertex figure $P$ of $\calR$.
Following \cite{notes}, it is sufficient to check that the resulting complex $\calC$ is a (possibly disconnected) compact Euclidean manifold with totally geodesic boundary. 

Let us notice that the pairings maps preserve the red/blue coloring on the $2$-dimensional faces of $\calR$. Beacuse of this fact, edges of the vertex figure $P$ that separate square faces (which correspond to red $2$-faces of $\calR$) are paired together, and the same holds for the edges separating a square face and a triangular face (which correspond to blue $2$-faces of $\calR$). This implies that the resulting complex $\calC$ is an $I$-bundle over a surface $\calS$ which is tessellated by copies of a Euclidean equilateral triangle, with the corresponding $\partial I$ sub-bundle tessellated by the triangular faces of the vertex figure $P$. 

The existence of a Euclidean structure on $\calC$ is then a consequence of the existence of a Euclidean structure on $\calS$, which is guaranteed if all the vertices of the tessellation of $\calS$ are adjacent to $6$ triangles. Each such vertex correponds to an ideal edge of $P$ separating two square faces, and each such edge correponds to a $2$-stratum of $S_4$. Therefore the hyperbolicity of $M_{\calT}$ is guaranteed if the triangulation $\calT$ is $6$-valent. The totally geodesic boundary of $M_{\calT}$ is tessellated by the tetrahedral facets of the various copies of $\calR$, and the volume of $M_{\calT}$ is equal to \begin{equation}\label{eq:manifoldvolume}2n\cdot v_{\mathcal{R}}=n\cdot4\pi^2/9.\end{equation}  
\end{proof}

\begin{rem}\label{rem:volumebound}
It is a well-known fact that the volume of a hyperbolic $4$-manifold $\calY$ (possibly with totally geodesic boundary) is proportional to the Euler characteristic of $\calY$ through the relation \begin{equation}\label{eq:volumeeq}\text{Vol}\; \calY=4\pi^2/3\cdot \chi(\calY). \end{equation}
Comparing this formula with (\ref{eq:manifoldvolume}) we see that the number of simplices in a $6$-valent $4$-dimensional triangulation with trivial face cycles is necessarily a multiple of $6$. 
\end{rem}

\begin{rem}\label{rem:cusps}The cusps of $M_{\calT}$ are in one-to-one correspondence with the \emph{edges} of the triangulation $\calT$ and, as shown in the proof of Proposition \ref{prop:hyperbolicstructure} the cusp section are interval-bundles over a flat surface (either the $2$-dimensional torus or the Klein bottle) tessellated by equilateral triangles. \end{rem}

\begin{prop}The boundary components of $M_{\calT}$ are tetrahedral manifolds, in one-to-one correspondence with the \emph{vertices} of the triangulation $\calT$. Each boundary component is triangulated by the link of the corresponding vertex.\end{prop}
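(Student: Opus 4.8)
The plan is to translate the octahedral face-pairings that define $M_{\calT}$ into face-pairings between the tetrahedral facets of the copies $\calR_i$, and then to recognise the resulting tessellated boundary as the disjoint union of the vertex links of $\calT$. Here a \emph{vertex of $\calT$} means an equivalence class of vertices of the simplices $\{\Delta_i\}$ under the identifications induced by the pairings $\{g_j\}$ (these identifications are consistent since the return maps are trivial). By the correspondences of Remark \ref{rem:correspondence}, the five tetrahedral facets of $\calR_i$ are in bijection with the five vertices of $\Delta_i$; write $T_v$ for the facet corresponding to a vertex $v$. I will use throughout that $T_v$ is combinatorially a $3$-simplex whose vertices (the edges of $\Delta_i$ through $v$) and whose four faces (the facets of $\Delta_i$ containing $v$) are canonically identified with the vertices and the facets of the link $\mathrm{lk}(v,\Delta_i)$, and that all four faces of $T_v$ are blue, since a tetrahedral facet of $\calR$ meets the remaining facets --- all octahedral --- only along its $2$-faces.

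First I would describe the face-pairings induced on the tetrahedral facets. Fix a pairing $g=g_{kl}\colon F\to G$ between a facet $F$ of $\Delta_k$ and a facet $G$ of $\Delta_l$, and recall that it determines the isometry $\calO_F\to\calO_G$ used to build $M_{\calT}$. Since the pairing maps preserve the red/blue colouring (as noted in the proof of Proposition \ref{prop:hyperbolicstructure}), this isometry sends blue faces to blue faces; moreover the blue faces of the octahedron $\calO_F$ are naturally indexed by the vertices of $F$, the blue face ``at $v\in F$'' being the one shared with $T_v$ (whose adjacency to $\calO_F$ is forced by $v\in F$). Unwinding the definition of the isometry $\calO_F\to\calO_G$ from the action of $g$ on the edges of $F$ --- which correspond to the ideal vertices of $\calO_F$ --- one checks that it carries the blue face at $v$ to the blue face at $g(v)$ by precisely the map induced by $g$ on $\mathrm{lk}(v,F)$. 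Hence, after the identification $\calO_F\sim\calO_G$, the facet $T_v$ is glued to $T_{g(v)}$ along the corresponding faces via the map induced by $g|_{\mathrm{lk}(v,F)}$. In other words, the face-pairings that $M_{\calT}$ induces on the union of tetrahedral facets are exactly those obtained by gluing the link $3$-simplices $\mathrm{lk}(v_i,\Delta_i)$ along the restrictions of the $g_j$'s.

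The Proposition now follows quickly. Two tetrahedral facets $T_v,T_w$ share a $2$-face in $\partial M_{\calT}$ only when some pairing $g$ satisfies $g(v)=w$, so that $v$ and $w$ are identified in $\calT$; conversely, a chain of pairings realising an identification of $v$ with $w$ produces a chain of such adjacencies. Therefore the connected components of $\partial M_{\calT}$ correspond bijectively to the vertices of $\calT$, and the component attached to a vertex $v$ is, as a triangulated complex, exactly the link of $v$ in $\calT$ --- the complex obtained by gluing $\mathrm{lk}(v_i,\Delta_i)$ over all representatives $v_i$ of $v$. Each such component is a tetrahedral manifold: it is tessellated by regular ideal hyperbolic tetrahedra $T_v$; it is a closed $3$-manifold because $M_{\calT}$ is a manifold with totally geodesic boundary (Propositions \ref{prop:manifold} and \ref{prop:hyperbolicstructure}); and every edge of the tessellation is $6$-valent, which is the content, transported to the boundary, of the $6$-valence of $\calT$ invoked in the proof of Proposition \ref{prop:hyperbolicstructure} (the flat surface $\calS$ has six triangles around each vertex). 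Completeness and finite volume are inherited from $M_{\calT}$.

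The one point requiring genuine care is the compatibility of the three levels of data: one must verify that the isometry $\calO_F\to\calO_G$ attached to a simplicial pairing $g\colon F\to G$ restricts, on each blue face, to the affine map induced by $g$ on the corresponding $2$-face of the relevant vertex link --- equivalently, that the square relating simplicial pairings of $S_4$-facets, octahedral isometries, and tetrahedral face-gluings commutes. This rests on Remark \ref{rem:correspondence} together with the fact that the octahedral isometry is pinned down by the behaviour of $g$ on edges; once it is in place, the rest is routine combinatorial bookkeeping.
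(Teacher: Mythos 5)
Your proposal is correct and follows the same route as the paper: the paper simply cites Remark \ref{rem:correspondence} (tetrahedral facets of $\calR$ correspond to vertices of $S_4$) to get the bijection between boundary components and vertices of $\calT$, and then observes that each component is the vertex link with its tetrahedra realised as regular ideal hyperbolic tetrahedra. You have merely made explicit the combinatorial bookkeeping (blue faces of $\calO_F$ indexed by vertices of $F$, the induced gluings being the restrictions of the $g_j$'s to vertex links) that the paper dismisses as ``a straightforward consequence.''
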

\begin{proof} The one-to-one correspondence between the boundary components of $M_{\calT}$ and the vertices of $\calT$ is a straightforward consequence of Remark \ref{rem:correspondence}.
Consider a boundary component $N$ corresponding to a vertex $v$ of $\calT$. The link $L(v)$ of the vertex $v$ in $\calT$  is tessellated by Euclidean tetrahedra. The boundary component $N$ is obtained by removing the vertices of the tessellation of $L(v)$ and realizing each tetrahedron as a regular ideal hyperbolic tetrahedron.
\end{proof}

A consequence of the construction above is that we can find many tetrahedral manifolds which are geodesically embedded:
\begin{prop}\label{prop:embed}
Let $N$ be a complete, finite-volume, hyperbolic tetrahedral $3$-manifold. If $N$ can be realised as the link of a vertex in a $6$-valent $4$-dimensional triangulation $\calT$ with trivial return maps, then $N$ is geodesically embedded.
\end{prop}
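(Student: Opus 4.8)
The plan is to obtain the ambient hyperbolic $4$-manifold by \emph{doubling} $M_{\calT}$ along its totally geodesic boundary. First, since by hypothesis $\calT$ is a $6$-valent $4$-dimensional triangulation with trivial return maps, Proposition~\ref{prop:hyperbolicstructure} applies directly and tells us that $M_{\calT}$ is a complete, non-compact, finite-volume hyperbolic $4$-manifold with totally geodesic boundary $\partial M_{\calT}$. By the Proposition immediately preceding the present one, the connected components of $\partial M_{\calT}$ are exactly the tetrahedral $3$-manifolds obtained from the vertex links of $\calT$, each tetrahedron of the link being realised as a regular ideal hyperbolic tetrahedron. The hypothesis provides a vertex $v$ of $\calT$ whose link yields $N$; hence $N$ is isometric to a boundary component $N_v\subset\partial M_{\calT}$, and this copy of $N$ sits in $M_{\calT}$ as a totally geodesic submanifold.

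Next, let $\overline{M_{\calT}}$ be a second copy of $M_{\calT}$ and form
\[
\calX \;=\; M_{\calT}\;\cup_{\partial}\;\overline{M_{\calT}},
\]
gluing the two copies along $\partial M_{\calT}$ by the identity map. Because $\partial M_{\calT}$ is totally geodesic, a collar of it in $M_{\calT}$ is isometric to a one-sided collar of a totally geodesic hyperplane in $\matH^4$; gluing two such collars along the hyperplane reproduces the standard two-sided neighbourhood of a totally geodesic hyperplane in $\matH^4$, so the two hyperbolic metrics fit together to a smooth hyperbolic metric on $\calX$. Since $\calX$ is assembled from two complete, finite-volume pieces glued along their (finite-volume) boundary, it is again complete and of finite volume; replacing $\calX$ by the connected component containing the image of $N_v$ if necessary, we obtain a connected, complete, finite-volume hyperbolic $4$-manifold without boundary. (If $\calT$ is orientable the same construction keeps $\calX$ orientable, by mirroring the orientation on the second copy.)

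Finally, the image of $\partial M_{\calT}$ in $\calX$ is a properly embedded totally geodesic hypersurface: it is the fixed-point set of the isometric involution of $\calX$ that interchanges the two copies of $M_{\calT}$. The component $N_v$ of this hypersurface is therefore a totally geodesic submanifold of $\calX$ isometric to $N$, which is precisely the statement that $N$ is geodesically embedded.

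The one genuinely non-routine point in this argument is the gluing step — that two hyperbolic manifolds with isometric totally geodesic boundary can be glued along that boundary to yield an honest hyperbolic manifold, and that completeness and finiteness of volume survive the gluing even though the boundary is noncompact (cusped). This is, however, exactly the classical ``mirror'' construction already invoked in the Introduction; for the cusped case one needs only to observe that the cusp cross-sections of $\partial M_{\calT}$ are glued compatibly, so that the cusps of $\calX$ are the expected ones and no volume is lost or gained.
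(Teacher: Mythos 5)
Your proposal is correct and follows exactly the paper's argument: identify $N$ with a totally geodesic boundary component of $M_{\calT}$ (via Proposition~\ref{prop:hyperbolicstructure} and the preceding proposition on vertex links) and then double $M_{\calT}$ along its boundary to obtain the ambient hyperbolic $4$-manifold. The only difference is that you spell out the standard verification that the doubled metric is hyperbolic, complete and of finite volume, which the paper leaves implicit.
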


\begin{proof}
The $3$-manifold $N$ is isometric to a totally geodesic boundary component of the hyperbolic $4$-manifold $M_{\calT}$. Simply double $M_{\calT}$ in its boundary to obtain a hyperbolic $4$-manifold in which $N$ is geodesically embedded.
\end{proof}

\begin{example}\label{ex:block}
Consider the labeled graph $K_6$ of Figure \ref{fig:graph}.
\begin{figure}[htbp]
\centering
\includegraphics[width=0.35\textwidth]{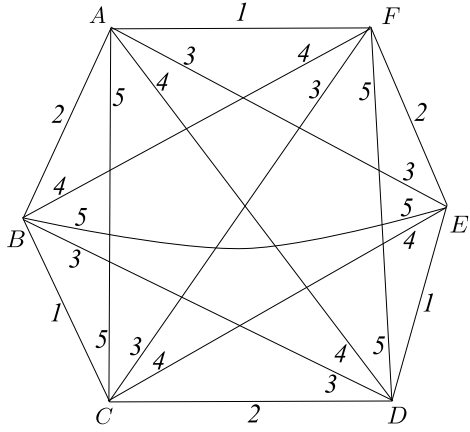}
\caption{The complete graph on six vertices $K_6$ with a $5$-coloring}\label{fig:graph}
\end{figure}
Build a triangulation by taking six copies of $S_4$, in one-to-one correspondence with the vertices of $K_6$. Label the facets of each copy of $S_4$ with the numbers $1,2,3,4,5$. For every edge $e$ of $K_6$ with label $i\in \{1,2,3,4,5\}$ connecting vertices $v$ and $w$, pair the tetrahedral facets with label $i$ of the simplices corresponding to $v$ and $w$ using the identity map, and call the resulting non-orientable triangulation $\calT$.

It is easy to see that the triangulation $\calT$ is $6$-valent and that all the return maps are trivial. The resulting manifold $M_{\calT}$ is isometric to the building block $\calB$ introduced in \cite{KS2014}. It has five tetrahedral boundary components and $10$ cusps. The boundary components are all isometric to each other. Their orientable double cover is the complement of a link in the $3$-sphere, depicted in \cite[p. 148]{ASh}, at the entry $n=4$, $\sigma(n)=6$. Also, the cusp sections of $M_{\calT}$ are all isometric to the product $K\times I$, where $I$ is the unit interval and $K$ is a Klein bottle tessellated by six equilateral triangles.
\end{example}

\section{The figure-eight knot complement}\label{sec:fig8bounds}
In this section we describe a $6$-valent $4$-dimensional triangulation which realises the figure-eight knot complement as a vertex link.
The figure-eight knot complement admits an ideal triangulation with two tetrahedra, $4$ triangular faces, $2$ edges and one vertex, represented in Figure \ref{fig:triangulation}.

\begin{figure}[htbp]
\centering
\includegraphics[width=0.4\textwidth]{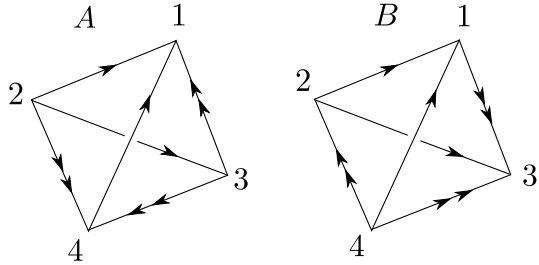}
\caption{The ideal triangulation of the figure eight-knot complement. There is a unique way to pair the faces of the tetrahedron $A$ to the faces of the tetrahedron $B$ respecting the labels on the edges.}\label{fig:triangulation}
\end{figure}

If we label the vertices of the tetrahedra with the numbers $1,2,3,4$ as in Figure \ref{fig:triangulation}, the pairing maps are as follows:
\begin{eqnarray}\label{eq:3triangulation}
A& &B\\
(1,2,4)&\leftrightarrow&(1,4,2)\\
(1,2,3)&\leftrightarrow&(3,2,1)\\
(1,4,3)&\leftrightarrow&(3,2,4)\\
(2,3,4)&\leftrightarrow&(4,1,3).
\end{eqnarray}

Let us begin by defining a $4$-dimensional triangulation consisting of $2$ copies of the simplex $S_4$ with two unpaired tetrahedral facets. Let us take two copies $A$ and $B$ of $S_4$ with vertices labeled by $1,2,3,4,5$, viewed as the cones over the tetrahedra with vertices $1,2,3,4$. We identify them along the faces which are adjacent to the vertex $5$ in such a way as to produce the cone over the triangulation of the figure-eight knot complement:
\begin{eqnarray}\label{eq:4triangulation}
A& &B\\
(5,1,2,4)&\leftrightarrow&(5,1,4,2)\\
(5,1,2,3)&\leftrightarrow&(5,3,2,1)\\
(5,1,4,3)&\leftrightarrow&(5,3,2,4)\\
(5,2,3,4)&\leftrightarrow&(5,4,1,3).
\end{eqnarray}

Let us call $\calC$ the resulting $4$-dimensional triangulation. The two unpaired facets have labels $(1,2,3,4)$. One is a facet of the $4$-simplex $A$ and the other of $B$. Notice that all the pairings map the vertex of $A$ with label $5$ to the vertex of $B$ with label $5$. This results in a vertex $v$ of $\calC$. The link of $v$ is clearly given by the triangulation of the figure-eight knot of Figure \ref{fig:triangulation}. It is clear that the face cycles of $\calC$ corresponding to the triangular $2$-faces of $A$ and $B$ which contain the vertex $5$ have length $6$: this is a direct consequence of the fact that the edges of the triangulation of the figure-eight knot have valence $6$. 

Now let us take $3$ copies of the triangulation $\calC$, labelled $\calX$, $\calY$, $\calZ$. We proceed by glueing them together along the unpaired tetrahedral facets in order to form a cycle as in Figure \ref{fig:ciclo}:

\begin{figure}[htbp]
\centering
\includegraphics[width=0.2\textwidth]{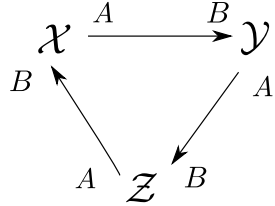}
\caption{Pairing maps between the copies $\calX$, $\calY$, $\calZ$ of $\calC$. We always pair a tetrahedral facet coming from the simplex $A$ with the facet coming from $B$ according to the arrows.}\label{fig:ciclo}
\end{figure}

As a pairing map, we alway use the transposition on the vertices with labels $1$ and $2$: \begin{equation}\label{eq:pairing}A:(1234)\rightarrow B:(2134). \end{equation}
Let us call $\calT$ the resulting triangulation. We already know that the triangles which contain the vertices with label $5$ are organized into cycles of $6$ elements. In order to prove that $\calT$ is $6$-valent, we must check that this holds also for the triangles whose vertices have labels in the set $\{1,2,3,4\}$. The four resulting cycles are the following:

\begin{align}
(1,2,3)_{\calX_{A}}\rightarrow(2,1,3)_{\calY_{B}}\rightarrow(2,3,1)_{\calY_{A}}\rightarrow(1,3,2)_{\calZ_{B}}\rightarrow(3,1,2)_{\calZ_{A}}\rightarrow
(3,2,1)_{\calX_{B}}\rightarrow(1,2,3)_{\calX_{A}}\\
(1,2,4)_{\calX_{A}}\rightarrow(2,1,4)_{\calY_{B}}\rightarrow(4,1,2)_{\calY_{A}}\rightarrow(4,2,1)_{\calZ_{B}}\rightarrow(2,4,1)_{\calZ_{A}}\rightarrow
(1,4,2)_{\calX_{B}}\rightarrow(1,2,4)_{\calX_{A}}\\
(1,4,3)_{\calX_{A}}\rightarrow(2,4,3)_{\calY_{B}}\rightarrow(4,3,1)_{\calY_{A}}\rightarrow(4,3,2)_{\calZ_{B}}\rightarrow(3,1,4)_{\calZ_{A}}\rightarrow
(3,2,4)_{\calX_{B}}\rightarrow(1,4,3)_{\calX_{A}}\\
(2,3,4)_{\calX_{A}}\rightarrow(1,3,4)_{\calY_{B}}\rightarrow(3,4,2)_{\calY_{A}}\rightarrow(3,4,1)_{\calZ_{B}}\rightarrow(4,2,3)_{\calZ_{A}}\rightarrow
(4,1,3)_{\calX_{B}}\rightarrow(2,3,4)_{\calX_{A}}
\end{align}

All the face cycles have length $6$, therefore the triangulation $\calT$ is $6$-valent. Moreover, all the return maps are trivial.
Notice that the pairing maps are of two types: those which extend the triangulation of the figure eight knot complement pair together the two copies $A$ and $B$ of the simplex $S_4$ in each copy of $\calC$, and these alternate with those defined by (\ref{eq:pairing}), which pair together the three different copies of $\calC$. 

\begin{rem}
The triangulation $\calT$ defined above is orientable. Therefore the associated manifold $M_{\calT}$ is orientable.
\end{rem}

\begin{lemma}\label{lemma:bound}
Let $\calM$ be a hyperbolic $n$-manifold which admits a fixed-point-free orientation reversing involution $i$. Suppose that $\calM$ is geodesically embedded in an $(n+1)$-manifold $\calX$. Then $\calM$ bounds geometrically.
\end{lemma}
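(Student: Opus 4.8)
The plan is to build the required hyperbolic $(n+1)$-manifold with totally geodesic boundary isometric to $\calM$ by \emph{cutting} $\calX$ open along $\calM$ and then \emph{folding} the cut-open manifold onto itself by means of $i$. A few harmless normalisations come first. Since $\calM$ admits an orientation-reversing homeomorphism it is orientable, and I will assume $\calX$ is orientable as well, so that $\calM$ is two-sided in $\calX$ (this is the situation occurring in Theorem~\ref{teo:fig8bounds} and is automatic once $\calX$ is taken orientable). Also, up to isotopy $i$ may be taken to be an isometry of $\calM$ (Mostow rigidity), and one checks that it remains a fixed-point-free involution: its square is an isometry homotopic to the identity, hence the identity, and a fixed point of the isometric representative would produce torsion in the fundamental group of the manifold $\calM/i$.

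First I would cut $\calX$ open along $\calM$. Since $\calM$ is a complete, finite-volume, two-sided totally geodesic hypersurface, the result is a complete, finite-volume hyperbolic $(n+1)$-manifold $\mathcal{W}$ — possibly disconnected — whose totally geodesic boundary is a disjoint union $\calM_1\sqcup\calM_2$ of two isometric copies of $\calM$. If $\calM$ separates $\calX$, then $\calM_2$ already bounds its own connected component of $\mathcal{W}$ and there is nothing more to prove, so assume $\mathcal{W}$ is connected. Now glue the boundary component $\calM_1$ to itself by identifying $x$ with $i(x)$ for every $x\in\calM_1$, and let $\calY$ denote the connected component of the resulting space that still contains $\calM_2$.

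It then remains to verify that $\calY$ is an orientable, complete, finite-volume hyperbolic $(n+1)$-manifold with totally geodesic boundary $\calM_2\cong\calM$, which is precisely the statement that $\calM$ bounds geometrically. This is where the three hypotheses are used, one in each check. Because $i$ is fixed-point-free, $\calY$ is a manifold near the folded copy of $\calM_1$: each such point has a neighbourhood obtained by gluing two half-balls along their totally geodesic flat faces, i.e.\ a ball. Because $i$ is an isometry and $\calM_1$ is totally geodesic, the hyperbolic metrics on the two sides match smoothly across that copy, so $\calY$ is hyperbolic with totally geodesic boundary $\calM_2$; completeness and finiteness of the volume are preserved by cutting and regluing along a closed totally geodesic hypersurface — if $\calM$ is non-compact, $i$ permutes its cusps and the identified cusp regions remain complete cusps. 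Because $i$ is orientation-reversing, the self-gluing is compatible with orientations: after fixing an orientation on $\mathcal{W}$, the two germs of $\mathcal{W}$ meeting along $\calM_1$ induce opposite normal directions in $\calY$, so the orientations agree across the gluing exactly when $i$ reverses the boundary orientation of $\calM_1$, that is, exactly when $i$ is orientation-reversing on $\calM$.

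The lemma is an assembly of standard pieces rather than a deep statement: freeness of $i$ makes $\calY$ a manifold, the isometric-involution property makes it hyperbolic with the correct boundary, orientation-reversal keeps it orientable, and the hypothesis that $\calM$ sits inside the \emph{finite-volume} manifold $\calX$ is what makes $\mathcal{W}$, and hence $\calY$, have finite volume — without an ambient finite-volume $\calX$ one would only obtain the infinite-volume manifold $\calM\times\matR$. The one point I would be careful with, and the place a careful reader should check, is the orientation bookkeeping in the folding step; the reduction to the orientable, two-sided case and the behaviour near the cusps are routine.
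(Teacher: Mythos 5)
Your proof is correct and follows essentially the same route as the paper: cut $\calX$ along $\calM$ to obtain a manifold with two totally geodesic boundary copies of $\calM$, then kill one copy by folding it onto itself via the fixed-point-free orientation-reversing involution $i$. The extra verifications you supply (replacing $i$ by an isometric representative via Mostow rigidity, the orientation bookkeeping, and the behaviour of cusps) are sound and only make explicit what the paper leaves implicit.
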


\begin{proof}
Let us cut the manifold $\calX$ along $\calM$, to produce a manifold $\calX^{\prime}$ with $2$ totally geodesic boundary components homeomorphic to $\calM$. We can ``kill'' one of the boundary components of $\calX^{\prime}$ by taking its quotient under the involution $i$. The resulting orientable manifold $\calY$ has a unique totally geodesic boundary component homeomorphic to $\calM$. Therefore $\calM$ geometrically bounds.
\end{proof}

Now we are finally able to prove the main result of the paper:
\begin{teo}\label{teo:fig8bounds2}
The figure-eight knot complement geometrically bounds a complete, orientable, hyperbolic $4$-manifold $\calY$ with $\chi(\calY)=2$.
\end{teo}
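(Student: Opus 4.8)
The plan is to assemble the pieces already constructed in this section. We have the $6$-valent $4$-dimensional triangulation $\calT$ built from three copies of $\calC$, glued cyclically along their unpaired tetrahedral facets via the transposition (\ref{eq:pairing}), and we have checked that all four nontrivial face cycles have length $6$ and that all return maps are trivial. By Proposition \ref{prop:manifold} the complex $M_{\calT}$ is a manifold, and by Proposition \ref{prop:hyperbolicstructure} it is a complete, non-compact, finite-volume hyperbolic $4$-manifold with totally geodesic boundary; since $\calT$ is orientable, so is $M_{\calT}$. The triangulation $\calC$ has exactly one vertex with label $5$, whose link is the ideal triangulation of the figure-eight knot complement from Figure \ref{fig:triangulation}; the three copies $\calX,\calY,\calZ$ contribute three such vertices, and the cyclic gluing maps (\ref{eq:pairing}) fix all labels in $\{1,2,3,4\}$ while permuting the three ``$5$''-vertices --- so I must check that these three vertices are in fact identified into a single vertex $v$ of $\calT$. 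Assuming this, Proposition \ref{prop:embed} tells us that the figure-eight knot complement $N$, realised as the boundary component of $M_{\calT}$ corresponding to $v$, is geodesically embedded in the double of $M_{\calT}$.

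Next I would compute the Euler characteristic. Each copy of $\calC$ is two copies of $S_4$, so $M_{\calT}$ is built from $2n = 6$ copies of the rectified $5$-cell $\calR$. By (\ref{eq:manifoldvolume}), $\Vol M_{\calT} = n\cdot 4\pi^2/9 = 3\cdot 4\pi^2/9 = 4\pi^2/3$, and comparing with (\ref{eq:volumeeq}) gives $\chi(M_{\calT}) = 1$. Cutting the eventual ambient manifold along $N$ and taking the quotient as in Lemma \ref{lemma:bound} will produce a manifold $\calY$ of Euler characteristic $2$; alternatively, one sees directly that the geometric bound $\calY$ is obtained from $M_{\calT}$ by an identification on its boundary, so $\chi(\calY) = 2\chi(M_{\calT}) = 2$ (the boundary $3$-manifold, being odd-dimensional, has $\chi = 0$, so doubling doubles $\chi$, and the quotient construction of Lemma \ref{lemma:bound} preserves this count).

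To invoke Lemma \ref{lemma:bound} I must verify its hypothesis: the figure-eight knot complement $N$ admits a fixed-point-free, orientation-reversing involution $i$. This is the step I expect to require the most care. The figure-eight knot is amphichiral, so its complement has an orientation-reversing symmetry; I would exhibit one explicitly that is free. One natural candidate comes from the fibration over the circle with once-punctured-torus fiber: an orientation-reversing symmetry of the fiber that is free, combined with a suitable reflection of the base circle, can be arranged to have no fixed points (or one checks that the relevant symmetry of the $2$-tetrahedron ideal triangulation has no fixed simplex, edge, or face and is free on the cusp). Once the free, orientation-reversing $i$ is in hand, Lemma \ref{lemma:bound} applied to $\calM = N$ geodesically embedded in the double of $M_{\calT}$ produces the orientable $\calY$ with totally geodesic boundary $N$, and the Euler characteristic count from the previous paragraph gives $\chi(\calY) = 2$. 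Collecting these statements proves the theorem.

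The main obstacle, then, is the exhibition of the free orientation-reversing involution of the figure-eight knot complement: amphichirality and invertibility give symmetries, but one must be sure the chosen involution acts freely (in particular freely on the cusp torus, so that the quotient is again a manifold with a single cusped totally geodesic boundary). A clean way to do this is to realise $i$ combinatorially on the ideal triangulation and check directly that it permutes the two tetrahedra, the four faces, and the two edges without fixing any, and acts on the single cusp cross-section (a torus tiled by triangles) as a free involution; this is a finite check. Everything else --- manifoldness, hyperbolicity, orientability, the volume and Euler characteristic computations, and the passage from geodesic embedding to geometric bounding --- follows formally from the propositions and lemma established above.
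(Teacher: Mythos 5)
Your proposal follows essentially the same route as the paper: realise the figure-eight knot complement as a totally geodesic boundary component of $M_{\calT}$, embed it geodesically in the double, invoke Lemma \ref{lemma:bound}, and recover $\chi(\calY)=2$ from the volume formula. Two remarks. First, the one step you leave as a ``finite check'' --- exhibiting a fixed-point-free orientation-reversing involution of the figure-eight knot complement --- is exactly the step the paper closes with a standard fact: the figure-eight knot complement is the orientable double cover of the (non-orientable) Gieseking manifold, so the nontrivial deck transformation is automatically free and orientation-reversing; citing this is cleaner and safer than trying to build an involution from amphichirality, since the orientation-reversing symmetries coming from amphichirality typically have fixed points. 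Second, your worry about whether the three ``$5$''-vertices of $\calX,\calY,\calZ$ get identified is misplaced: the gluings (\ref{eq:pairing}) are along the facets $(1,2,3,4)$, which do not contain the vertex $5$, so the three vertices remain distinct and $\partial M_{\calT}$ contains three separate copies of the figure-eight knot complement (plus a fourth tetrahedral component, as noted in Remark \ref{rem:minvolume}). This does not affect the argument --- any one of the three copies serves as the geodesically embedded copy --- but the check you propose would in fact come out the opposite way from what you assumed. With the Gieseking fact in place, the rest of your argument (orientability, the Euler characteristic count via volume, and the passage through Lemma \ref{lemma:bound}) is correct and matches the paper.
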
 

\begin{proof}
The figure-eight knot complement $\calM$ is geodesically embedded in the double $\calX$ of the manifold $M_{\calT}$, where $\calT$ is the $6$-valent triangulation defined above.  
Moreover it admits a fixed-point-free orientation-reversing involution, since it is the orientable double cover of the Giseking manifold. By applying Lemma \ref{lemma:bound} we see that $\calM$ geometrically bounds an orientable, hyperbolic $4$-manifold $\calY$ which is tessellated by $12$ copies of the polytope $\calR$. The volume of $\calY$ is equal to $12\cdot v_{\calR}=8\pi^2/3$, and by (\ref{eq:volumeeq}) we see that necessarily $\chi(\calY)=2$.
\end{proof}

\begin{rem}\label{rem:minvolume}
It is possible to improve the statement of Theorem \ref{teo:fig8bounds2} and prove that the figure-eight knot complement bounds a complete, orientable, hyperbolic $4$-manifold $\calZ$ with $\chi(\calZ)=1$. To see this, consider the manifold $M_{\calT}$, where $\calT$ is the triangulation defined in Section \ref{sec:fig8bounds}. The boundary $\partial M_{\calT}$ decomposes into $3$ copies of the figure eight-knot complement and a fourth component $N$ which is tessellated by $24$ copies of the regular ideal tetrahedron.

This fourth boundary component is homeomorphic to the manifold \verb otet24_00263  from the Platonic census \cite{Goerner} shipped with the upcoming SnapPy version 2.4 \cite{SnapPy}. This manifold is the orientable double cover of the manifold \verb ntet12_00000  from the same census. Therefore we can produce $\calZ$ by identifying two of the three copies of the figure-eight knot complement in $\partial M_{\calT}$ using an orientation-reversing isometry, and killing the boundary component $N$ using a fixed-point-free orientation reversing involution.

\end{rem}

We conclude by asking the following natural question:
\begin{quest}\label{quest}
Does every tetrahedral $3$-manifold possess an ideal triangulation which is realised as a vertex link of some $6$-valent $4$-dimensional triangulation with trivial return maps?
\end{quest}
A positive answer to this question would readily imply that all tetrahedral $3$-manifolds are geodesically embedded in some hyperbolic $4$-manifold.

\medskip

\begin{flushleft}
\textit{Leone Slavich\\
Dipartimento di Matematica \\
Largo Bruno Pontecorvo 5\\
56127 Pisa, Italy\\}
\texttt{leone dot slavich at gmail dot com}
\end{flushleft}

\end{document}